\newtheorem{theorem}{Theorem}
\newtheorem{proposition}{Proposition}
\newtheorem{corollary}{Corollary}
\newtheorem{lemma}{Lemma}
\newtheorem{question}{Question}
\newtheorem{claim}{Claim}
\newtheorem{remark}{Remark}
\newcommand{\thmref}[1]{Theorem~\ref{thm:#1}} 
\newcommand{\lemref}[1]{Lemma~\ref{lem:#1}} 
\newcommand{\remref}[1]{Remark~\ref{rem:#1}} 
\newcommand{\claimref}[1]{Claim~\ref{claim:#1}} 
\newcommand{\secref}[1]{Section~\ref{sec:#1}} 
\newcommand{\eqnref}[1]{(\ref{eq:#1})} 
\def\be{\begin{equation} }
\def\ee{ \end{equation}}
\def\ben{\begin{equation*}}
\def\een{\end{equation*}}
\def\bea{\begin{eqnarray}}
\def\eea{\end{eqnarray}}
\def\ee{\end{eqnarray}}
\def\bean{\begin{eqnarray*}}
\def\eean{\end{eqnarray*}}
\newcommand\ignore[1]{}
\def\N{\mathbb{N}} 
\newcommand{\Prp}[2]{\mathbb{P}_{#1}\left(#2\right)} 
\newcommand{\Exp}[2]{\mathbb{E}_{#1}\left[#2\right]} 
\newcommand{\Prpwo}[1]{\mathbb{P}_{#1}} 
\renewcommand{\Pr}[1]{\mathbb{P}\left(#1\right)} 
\newcommand{\bigoh}[1]{O\left(#1\right)}
\newcommand{\ohmega}[1]{\Omega\left(#1\right)}
\def\sP{\mathcal{P}}
\def\sU{\mathcal{U}}
\newcommand\QED{\ifhmode\allowbreak\else\nobreak\fi
\quad\nobreak$\Box$\medbreak}
\newcommand{\proofstart}{\par\noindent\sl Proof:\rm\enspace}
\newcommand{\proofend}{\QED\par}
\newenvironment{proof}{\proofstart}{\proofend}
\def\eps{\epsilon}
\def\Tmix{{\rm T}_{\rm mix}}
\def\Trmix{{\rm T}_{\rm rmix}}
\def\Thit{{\rm T}_{\rm hit}}
\def\bT{[0,+\infty)}
\def\dtv{d_{\rm TV}}
\begin{document}
\title{Mixing and hitting times for finite Markov chains}
\author{Roberto Imbuzeiro Oliveira\thanks{IMPA, Rio de Janeiro, RJ, Brazil, 22430-040.  Work supported by a {\em Bolsa de Produtividade em Pesquisa} and by a {\em Pronex} grant from CNPq, Brazil.    }}

\maketitle
\abstract{\noindent Let $0<\alpha<1/2$. We show that that the mixing time of a continuous-time Markov chain on a finite state space is about as large as the largest expected hitting time of a subset of the state space with stationary measure $\geq \alpha$. Suitably modified results hold in discrete time and/or without the reversibility assumption. The key technical tool in the proof is the construction of random set $A$ such that the hitting time of $A$ is a light-tailed stationary time for the chain. We note that essentially the same results were obtained independently by Peres and Sousi.}

\section{Introduction}\label{sec:setup}

The present paper is a contribution to the general quantitative theory of finite-state Markov chains that was started in
\cite{Aldous_IneqReversible} and further developed in \cite{AldousLovaszWinkler_IneqGeneral}.  The gist of those papers is that the so-called mixing time of a Markov chain is fundamentally related, in a precise quantitative sense, to hitting times and other quantities of interest.  Our main achievement is to add a new equivalent quantity to this list by showing that mixing times nearly coincide with maximum hitting times of large sets in the state space.

\begin{remark}[Important remark] The results in this paper were proven (but not made public) around May 2010. In July 2011 we learned that extremely similar results for discrete-time chains have been proven independently by Peres and Sousi \cite{PeresSousi_MyPaper}.  We then decided to submit our results, in the hope that our ideas might also be found useful and interesting. We will discuss their results at several points in our paper. Here we just mention that the main difference between the papers is the construction of the stopping time in \lemref{buildstop} (see \secref{discussion}).\end{remark}

We need to introduce some notions before we  clarify what we mean; \cite{AldousFill_Book} and
\cite{LevinPeresWilmer_Book} are our main references for the involved concepts. In this paper $E$ will always denote the finite state space of a continuous-time Markov chain with generator $Q$, with transition rates $q(x,y)$ ($x,y\in E$, $x\neq y$). Most of the time $Q$ and $E$ will be implicit in our notation. The trajectories of the chain are denoted by $\{X_t\}_{t\geq 0}$, and the law of $\{X_t\}_{t\geq 0}$ started from $x\in
E$ or from a probability distribution $\mu$ over $E$ are denoted by $\Prpwo{x}$ or $\Prpwo{\mu}$ (respectively) . For $t\geq 0$, we write:
$$p_t(x,y)\equiv \Prp{x}{X_t=y}\,\,\,(x,y\in E)$$
for the transition probability from $x$ to $y$ at time $t$. In what follows we will always assume that $Q$ is irreducible, which implies that it has a unique stationary distribution
$\pi$ and: 
$$\forall (x,y)\in E^2\,:\,\lim_{t\to+\infty}p_t(x,y) = \pi(y).$$
We can measure the {\em rate} of this convergence after we introduce
a metric over probability distributions. We choose the total
variation metric:\[\dtv(\mu,\nu) = \max_{A\subset E}|\mu(A) - \nu(A)| = \frac{1}{2}\sum_{a\in E}|\mu(a)- \nu(a)|\;\;(\mu,\nu\mbox{ prob. measures over $E$})\] and define the mixing time of $Q$ as:
$$\Tmix^Q(\delta) = \inf\{t\geq 0\,:\, \forall{x\in E},\, \dtv(p_t(x,\cdot),\pi(\cdot))\leq
\delta\}.$$

Finally, given $\emptyset\neq
A\subset E$, we may define the hitting time of $A$ as:
$$H_A\equiv \inf\{t\geq 0\,:\, X_t\in A\}.$$

\noindent {\sf Results for reversible chains.} Recall that $Q$ is {\em reversible} if $\pi(x)q(x,y) = \pi(y)q(y,x)$ for all distinct $x,y\in E$. In this setting, Aldous proved:

\begin{theorem}[Aldous, \cite{Aldous_IneqReversible}]\label{thm:aldous}There exist universal (ie. chain independent) constants $c_-,c_+>0$
such that for any irreducible, reversible, finite-state-space Markov
chain in continuous time with generator $Q$:
$$c_-\,\Thit^Q\leq \Tmix^Q(1/4)\leq c_+\,\Thit^Q$$
where $\Thit^Q\equiv \sup\{\pi(A)\,\Exp{x}{H_A}\,:\,x\in E,\, \emptyset \neq A\subset E\}.$\end{theorem}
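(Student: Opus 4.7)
The proof splits into two inequalities. The lower bound $c_-\Thit^Q \leq \Tmix^Q(1/4)$ is the easier direction and in fact holds without reversibility. Fix $x\in E$ and $A\subset E$, and write $T=\Tmix^Q(1/4)$. By the definition of $T$, for every state $y$, $\Prp{y}{X_T\in A}\geq \pi(A)-1/4$. When $\pi(A)\geq 1/2$ this gives $\Prp{y}{H_A\leq T}\geq 1/4$, and iterating at times $T,2T,\ldots$ via the Markov property yields geometric tails for $H_A$ and hence $\pi(A)\Exp{x}{H_A}\leq 4T$. For sets with $\pi(A)<1/2$, I would first run the chain for time $T$ to bring its law close to $\pi$, and then combine the large-set estimate with a bound on $\Exp{\pi}{H_A}$ obtained from a random-target decomposition; in the reversible case this last control is cleanly given by the spectral expansion of $\Exp{\pi}{H_A}$ in terms of the eigenfunctions of $Q$.

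For the upper bound $\Tmix^Q(1/4)\leq c_+\Thit^Q$, the strategy is to exhibit, for each starting state $x$, a stopping time $\tau_x$ with $X_{\tau_x}\sim\pi$ (a \emph{stationary time}) and $\Exp{x}{\tau_x}\leq C\,\Thit^Q$. Standard arguments then give $\Tmix^Q(1/4)\leq O(\max_x\Exp{x}{\tau_x})$ (for instance, via Markov's inequality applied to $\tau_x$ and the observation that the conditional law of $X_t$ given $\tau_x\leq t$ is stationary). The natural candidate is $\tau_x=H_A$ for a random set $A$ drawn independently of the chain from a distribution depending on $x$. One requires $\pi(A)\geq 1/2$ almost surely, so that $\Exp{x}{H_A}$ is controlled by $\Thit^Q$, while the distribution of $A$ must be tuned so that $X_{H_A}$ has law exactly (or at least approximately) $\pi$. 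Reversibility enters crucially here: by time-reversal, the entry distribution into $A$ for the chain run from a stationary start coincides with the exit distribution from $A$, and this symmetry can be massaged into the identity $X_{H_A}\sim\pi$.

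The main obstacle is the construction of this random set $A$: it must be large on the one hand, and sufficiently tailored to $x$ on the other to produce an exactly (or nearly) stationary terminal state. This is precisely the content of \lemref{buildstop} in the present paper and is the technical heart of the argument; Aldous's original proof achieves the same conclusion by a different mechanism, which explains why the remark in \secref{setup} emphasizes that the novelty over \cite{PeresSousi_MyPaper} lies in the construction of the stopping time rather than in the surrounding architecture of the proof.
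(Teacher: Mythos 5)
Two things first, for calibration: the paper does not actually prove \thmref{aldous} --- it is quoted from Aldous, and the paper's implicit route to it is \lemref{buildstop} plus \remref{optimize} (the tail bound $\Prp{x}{T>t}\leq\sqrt{\Thit^Q/t}$) fed into the machinery of \secref{main}. Your high-level architecture for the upper bound (a stationary time realized as the hitting time of a random set) is indeed the paper's, but the step you call ``standard'' is where the whole difficulty lives, and your justification for it is false. The law of $X_t$ conditioned on $\{\tau_x\leq t\}$ is \emph{not} stationary: one has $X_{\tau_x}\sim\pi$ and hence $X_{\tau_x+s}\sim\pi$ for each \emph{deterministic} $s$, but on $\{\tau_x\leq t\}$ the overshoot $t-\tau_x$ is random and correlated with $X_{\tau_x}$, so no such identity holds. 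This is precisely why, for general chains, a cheap stationary time only yields a bound on the Ces\`{a}ro mixing time $\Trmix$ (in \secref{general} an independent uniform shift $\sU$ is introduced exactly to decouple landing position from elapsed time), and why the reversible case requires the $L^2$/Cauchy--Schwarz computation of \secref{main}: splitting on $\{T_x\leq L\}$, averaging $t$ over $[L,U]$, and using reversibility to collapse $\sum_a p_{t-s}(u,a)p_{t-s'}(a,u')/\pi(a)$ into $p_{2t-s-s'}(u,u')/\pi(u')$. Relatedly, you misplace where reversibility enters: \lemref{buildstop} uses none --- it is a greedy induction, valid for any irreducible chain, producing a decreasing chain $E=A_1\supset\cdots\supset A_n$ and weights $p_i$ with $\sum_{j\leq i}p_j\rho_{A_j}(a_i)=\pi(a_i)$; no time-reversal argument is involved. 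Finally, your requirement that $\pi(A)\geq 1/2$ almost surely is unachievable in general (if one state carries more than half the stationary mass, every admissible $A$ contains it, and started there $X_{H_A}$ is deterministic, not $\pi$-distributed). The property the construction actually delivers is $\Pr{\pi(A)\geq\eps}\geq 1-\eps$ for every $\eps$, together with the nesting of the $A_i$, which gives $\Prp{x}{T>t}\leq\eps+\Thit^Q/(\eps t)$ and, after optimizing in $\eps$, the square-root tail bound; it is this tail bound, not a first-moment bound, that the mixing argument consumes.

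On the lower bound, the case $\pi(A)\geq 1/2$ is correct as you state it, but for small $A$ your sketch hides real work: total-variation closeness of ${\rm Law}(X_T)$ to $\pi$ does not let you replace $\Exp{\mu}{H_A}$ by $\Exp{\pi}{H_A}$ (the discrepant mass may sit exactly where $H_A$ is huge), so one must instead iterate a tail estimate such as $\Prp{x}{H_A>k(T+s)}\leq(1/4+\Prp{\pi}{H_A>s})^k$ and then control $\Prp{\pi}{H_A>s}$ via the Dirichlet eigenvalue of $A^c$ and the comparison of the spectral gap with $\Tmix^Q(1/4)$. That route does work for reversible chains, but it is not carried out in your proposal, and it is also not the paper's route: the Appendix proves only the large-set inequality $\Thit^Q(\alpha)\leq c(\alpha)\Trmix^Q(1/4)$ by a purely probabilistic iteration, and the full small-set statement of \thmref{aldous} is simply cited from \cite{Aldous_IneqReversible}.
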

Notice that $\Thit^Q=1$ if $Q$ consists of iid jumps at rate $1$ between states in $E$, so $\Thit^Q$ can be viewed as a measure of how ``non-iid" the chain is. Informally, the mixing time is another measure of ``non-iid-ness", and the Theorem shows that these two measures are quantitatively related in a very strong sense. We emphasize that \thmref{aldous} is part of a much larger family of universal inequalities for reversible Markov chains; see
\cite{Aldous_IneqReversible} for details.

In this paper we prove a stronger form of \thmref{aldous}. Given $\alpha>0$, let:
$$\Thit^Q(\alpha)\,\equiv \sup\{\Exp{x}{H_A}\,:\,x\in E,\, \emptyset \neq A\subset E,\, \pi(A)\geq \alpha\}.$$
Unlike $\Thit^Q$, only ``large enough" sets are
considered in this definition. We prove in \secref{main} that:
\begin{theorem}\label{thm:main}For any $0<\alpha<1/2$ there exist constants $C_+(\alpha),C_-(\alpha)>0$ depending only on $\alpha$ such that, for any irreducible continuous-time Markov chain as above:
$$C_-(\alpha)\,\Thit^Q(\alpha)\leq \Tmix^Q(1/4)\leq C_+(\alpha)\,\Thit^Q(\alpha).$$\end{theorem}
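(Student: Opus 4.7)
\noindent The proof splits into the two inequalities. The lower bound $C_-(\alpha)\,\Thit^Q(\alpha)\leq\Tmix^Q(1/4)$ is standard. Fix $M=\Tmix^Q(1/4)$, a starting state $x_0\in E$, and a set $A\subset E$ with $\pi(A)\geq\alpha$. Submultiplicativity of total variation along the semigroup gives $\dtv(p_{kM}(y,\cdot),\pi)\leq 2^{-k}$ for every $y\in E$ and $k\geq 1$; once $k\geq k_0(\alpha):=\lceil\log_2(2/\alpha)\rceil$ this forces $p_{k_0 M}(y,A)\geq\pi(A)-\alpha/2\geq\alpha/2$ uniformly in $y$. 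A strong-Markov iteration then yields $\Prp{x_0}{H_A>jk_0 M}\leq(1-\alpha/2)^j$, whence $\Exp{x_0}{H_A}\leq 2k_0 M/\alpha$. Taking suprema over $x_0$ and admissible $A$ produces $\Thit^Q(\alpha)\leq C_-(\alpha)^{-1}\,\Tmix^Q(1/4)$ with $C_-(\alpha)$ of order $\alpha/\log(1/\alpha)$.

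\noindent For the upper bound $\Tmix^Q(1/4)\leq C_+(\alpha)\,\Thit^Q(\alpha)$ my plan is to follow the strategy announced in the abstract and isolated as \lemref{buildstop}: for each starting state $x_0\in E$, construct a random subset $A\subset E$ (drawn using auxiliary randomness independent of the chain) such that $\pi(A)\geq\alpha$ almost surely and $\tau:=H_A$ is a \emph{strong stationary time} under $\Prpwo{x_0}$, that is $X_\tau\sim\pi$ with $X_\tau$ independent of $\tau$. Given such a $\tau$, the deterministic estimate $\Exp{x_0}{H_A\mid A}\leq\Thit^Q(\alpha)$ averages to $\Exp{x_0}{\tau}\leq\Thit^Q(\alpha)$; the classical strong-stationary-time inequality $\dtv(p_t(x_0,\cdot),\pi)\leq\Prp{x_0}{\tau>t}$ combined with Markov then yields $\Tmix^Q(1/4)\leq 4\,\Thit^Q(\alpha)$, and absorbing constants produces the claimed $C_+(\alpha)$.

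\noindent The main obstacle is clearly the construction of $A$, in which two requirements pull in opposite directions: $\pi(A)$ must not overshoot $\alpha$ or the conditional moment bound on $\Exp{x_0}{H_A\mid A}$ is lost, yet $A$ has to be correlated with the chain in just the right way to make $X_{H_A}$ come out $\pi$-distributed regardless of $x_0$. Independent Bernoulli inclusion of each $y\in E$ with probability $\pi(y)/\alpha$ (suitably thinned to enforce $\pi(A)\geq\alpha$) hits the size target but biases $X_{H_A}$ toward states easily reached from $x_0$; conditioning the choice of $A$ on the chain's past removes that bias but destroys the size/moment control. My first guess for the resolution is a Poisson ``candidate-marking'' mechanism running alongside $\{X_t\}$, in which states become eligible to enter $A$ at state-dependent rates calibrated so that the first \emph{accepted} visit is simultaneously $\pi$-distributed and independent of its own time. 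Designing this mechanism so that it yields both the strong-stationarity property and the sharp mean bound is the technical heart of \lemref{buildstop}; once it is in place, the deduction of the theorem proceeds as in the previous paragraph.
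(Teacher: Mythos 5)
Your lower bound is fine and essentially the paper's own argument (the Appendix runs the same geometric iteration, just phrased through $\Trmix^Q$). The problem is the upper bound, where your plan asks for strictly more than what the key lemma can deliver, and then skips the step that is actually the technical heart of the theorem. First, \lemref{buildstop} does \emph{not} produce a random set with $\pi(A)\geq\alpha$ almost surely: the construction yields a nested chain $E=A_1\supset\cdots\supset A_n$ going all the way down to a singleton, and one only gets $\Prp{\mu_0}{\pi(A)\geq\eps}\geq 1-\eps$. Consequently the mean bound is not $\Exp{x_0}{T}\leq\Thit^Q(\alpha)$ but the weaker tail bound $\Prp{x_0}{T>t}\leq\eps+\Thit^Q(\eps)/t$, with an irreducible additive $\eps$. (This is not an artifact: for $X_{H_A}$ to carry the full stationary mass of every state, including states of tiny $\pi$-measure that the chain tends to reach late, $A$ must sometimes be small; insisting on $\pi(A)\geq\alpha$ a.s.\ is exactly the tension you identify, and your proposed Poisson ``candidate-marking'' resolution is stated as a guess, not a construction.)

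Second, and more seriously, $T=H_A$ is a \emph{stationary} time, not a \emph{strong stationary} time: nothing in the construction makes $X_T$ independent of $T$. The inequality $\dtv(p_t(x_0,\cdot),\pi)\leq\Prp{x_0}{T>t}$ is only valid for strong stationary times, so your two-line deduction of $\Tmix^Q(1/4)\leq 4\,\Thit^Q(\alpha)$ does not follow. This is precisely why the paper's proof of \thmref{main} is long: starting from the mere stationarity of $X_T$ plus the tail bound, it bounds $\dtv(p_t(x,\cdot),p_t(z,\cdot))$ by splitting on $\{T\leq L\}$, applying Cauchy--Schwarz in $L^2(\pi)$, averaging over $t\in[L,U]$, and crucially invoking \emph{reversibility} to collapse $\sum_a p_{t-s}(u,a)p_{t-s'}(a,u')/\pi(a)$ into $p_{2t-s-s'}(u,u')/\pi(u')$; only then does submultiplicativity of $\overline{d}$ finish the job. (Indeed some such input is necessary: the theorem is false for general non-reversible chains, for which one must pass to $\Trmix^Q$ as in \thmref{general}.) As written, your argument for the upper bound has a genuine gap at both the construction of $A$ and the passage from a stationary time to a total-variation bound.
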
 

Although similar to \thmref{aldous}, the intuitive content of \thmref{main} seems different:  instead of measures of non-iid-ness, we have a statement that says that mixing times are about as large as the expected time necessary to hit any large set, which is quite reasonable. \thmref{main} should also be easier to use in applications. The condition $\alpha<1/2$ is discussed in \secref{discussion}.

\begin{remark}\thmref{main} also holds in discrete time if $p_1(x,x)\geq 1/2$ for all $x\in E$ (use \cite[Theorem 20.3]{LevinPeresWilmer_Book}). Peres and Sousi \cite{PeresSousi_MyPaper} have shown that $p_1(x,x)\geq \beta>0$ for any fixed $\beta>0$ also suffices. Some lower bound on $p_1(x,x)$ is necessary; otherwise there are counterexamples such as large complete bipartite graphs with an edge added to one of the parts.\end{remark}

\noindent {\sf Results for non-reversible chains.} \thmref{main} and the main results of \cite{Aldous_IneqReversible} only apply to reversible chains; counterexamples can be found in that paper.  Aldous, L\'{o}vasz and Winkler \cite{AldousLovaszWinkler_IneqGeneral} developed a quantitative theory in the general case using a different notion of mixing time. Let ${M}_1([0,t])$ be the set of all probability measures over $[0,t]$ and define:
$$\Trmix^Q(\delta)\equiv \inf\left\{t\geq 0\,:\, \begin{array}{l}\exists \mu\in M_1([0,t]),\, \forall x\in E,\,\\ \dtv\left(\int_{[0,t]}\,p_s(x,\cdot)\,\mu(ds),\pi\right)\leq \delta\end{array}\right\}.$$
In discrete time, one replaces $M_1([0,t])$ with the set $M_1(\{0,\dots,t\})$ of all probability measures over $\{0,\dots,t\}$.  Aldous, L\'{o}vasz and Winkler \cite{AldousLovaszWinkler_IneqGeneral} proved an analogue to \thmref{aldous} for arbitrary Markov chains in discrete time, where $\Trmix$ replaces $\Tmix$ (their method can also be applied in continuous time). We prove an analogue of \thmref{main} in this setting:

\begin{theorem}\label{thm:general}For any $\alpha\in (0,1/2)$ there exist $C'_-(\alpha)>0,C'_+(\alpha)$ such that for any irreducible finite-state Markov chain $Q$ in continuous time:
$$C'_-(\alpha)\,\Thit^Q(\alpha) \leq \Trmix^Q(1/4)\leq C'_+(\alpha)\,\Thit^Q(\alpha).$$\end{theorem}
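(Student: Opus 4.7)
\emph{Proof plan.} The argument splits into the two inequalities; the lower bound is a general feature of $\Trmix$, while the upper bound relies on the buildstop lemma (\lemref{buildstop}) advertised in the abstract.

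\textbf{Lower bound.} I would fix a set $A\subset E$ with $\pi(A)\geq\alpha$ and a point $x\in E$, set $t=\Trmix^Q(1/4)$, and pull a measure $\mu\in M_1([0,t])$ from the definition of $\Trmix$. Applying the $\dtv\leq 1/4$ bound to the test set $A$ yields $\int p_s(y,A)\,\mu(ds)\geq\pi(A)-1/4\geq\alpha-1/4$, and since $\mu$ is supported on $[0,t]$ this integral is bounded by $\Prp{y}{H_A\leq t}$; thus $\Prp{y}{H_A\leq t}\geq\alpha-1/4$ \emph{uniformly in} $y$. When $\alpha>1/4$, dividing time into consecutive blocks of length $t$ and iterating via the Markov property gives $\Prp{x}{H_A>nt}\leq\bigl(1-(\alpha-1/4)\bigr)^n$, whence $\Exp{x}{H_A}\leq t/(\alpha-1/4)$. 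For $\alpha\leq 1/4$, a preliminary step replaces $1/4$ by some $\delta<\alpha$ via a submultiplicative-type estimate $\Trmix^Q(\delta)\leq C_\alpha\,\Trmix^Q(1/4)$, and the same argument runs. Taking the supremum over $A$ and $x$ gives $\Thit^Q(\alpha)\leq C(\alpha)\,\Trmix^Q(1/4)$.

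\textbf{Upper bound.} I would invoke \lemref{buildstop} to obtain, for each starting state $x$, a randomized stopping time $\tau_x$ of the form $H_{A_x}$ (with $A_x\subset E$ random, independent of the trajectory) satisfying $X_{\tau_x}\sim\pi$ and $\Exp{x}{\tau_x}\leq K:=C_1\,\Thit^Q(\alpha)$. The task is then to exhibit a \emph{single} $\mu\in M_1([0,T])$, with $T$ of order $K$, such that $\dtv\bigl(\int p_s(x,\cdot)\,\mu(ds),\pi\bigr)\leq 1/4$ for every $x$. A natural choice is the uniform measure on $[0,T]$; the expected empirical occupation $(1/T)\int_0^T p_s(x,\cdot)\,ds$ can then be analyzed via a renewal decomposition along successive stationary times, where each cycle (from the second onward) begins at a $\pi$-distributed state and has expected length at most $K$. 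Cycle-by-cycle, the expected occupation of $y$ equals $\pi(y)\,\Exp{\pi}{\tau}$, so the empirical marginal deviates from $\pi$ by $O(K/T)$ in total variation and $T=4K$ suffices.

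\textbf{Main obstacle.} The key subtlety is that $X_{\tau_x}\sim\pi$ is a stationarity statement that does \emph{not} imply independence of $\tau_x$ and $X_{\tau_x}$; were it strong stationarity, \lemref{buildstop} would already yield $\Tmix^Q(1/4)\leq 4K$ and the conclusion would follow at once from $\Trmix^Q\leq\Tmix^Q$. In the absence of independence the cycles past the first are not genuinely i.i.d., and the renewal estimate must rest on the marginal $\pi$-distribution per cycle alone, while keeping the final constant depending only on $\alpha$ and not on the chain. This is the technical heart of the argument.
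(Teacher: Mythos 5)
Your lower bound is essentially the paper's argument (Proposition in the Appendix): apply the defining measure of $\Trmix$ to the test set $A$, get a uniform-in-$y$ lower bound on $\Prp{y}{H_A\leq t}$, iterate by the Markov property, and handle small $\alpha$ by first shrinking the threshold via submultiplicativity. That part is fine.

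The upper bound, however, has a genuine gap: you assume that the stationary time of \lemref{buildstop} satisfies $\Exp{x}{\tau_x}\leq C_1\,\Thit^Q(\alpha)$. The lemma does not give this, and it is not true in general; it gives only the tail bound $\Prp{x}{\tau_x>t}\leq \eps+\Thit^Q(\eps)/t$. Concretely, $\tau_x=H_{A}$ where $A$ ranges over a decreasing chain of sets, and the sets $A_i$ with $\pi(A_i)<\alpha$ — which carry total probability up to $\alpha$ — can have expected hitting times vastly larger than $\Thit^Q(\alpha)$ (a singleton deep in a bottleneck, say). In fact, since $\tau_x$ is a mean-minimal stationary time (\remref{halt}), the bound $\Exp{x}{\tau_x}\leq C\,\Thit^Q(\alpha)$ is, by Aldous--Lov\'asz--Winkler, essentially equivalent to the inequality $\Trmix^Q(1/4)\leq C'\,\Thit^Q(\alpha)$ you are trying to prove; so as written your argument is close to circular. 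Your ``main obstacle'' paragraph worries about the wrong issue: the lack of independence of $\tau_x$ and $X_{\tau_x}$ is harmless for your occupation-measure comparison (shifting the window $[0,T]$ to $[\tau_x,\tau_x+T]$ only needs $X_{\tau_x}\sim\pi$ and the strong Markov property, and costs $\Exp{x}{\min(\tau_x,T)}/T$ in total variation). The real problem is that you cannot control that cost by $K/T$ with $K=O(\Thit^Q(\alpha))$.

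The repair is exactly what the paper does. Using only the tail bound, truncation gives $\Exp{x}{\min(\tau_x,T)}/T\leq \Prp{x}{\tau_x>\lambda T}+\lambda\leq \alpha+\Thit^Q(\alpha)/(\lambda T)+\lambda$, which after optimizing $\lambda$ yields a total-variation bound of the form $2\alpha+4\sqrt{\Thit^Q(\alpha)/T}$ for the pairwise distance $\dtv\bigl(\int p_s(x,\cdot)\,\mu(ds),\int p_s(z,\cdot)\,\mu(ds)\bigr)$ with $\mu$ uniform on $[0,T]$. For $T=O_\alpha(\Thit^Q(\alpha))$ this is only $\leq (1+2\alpha)/2$, bounded away from $1$ but not $\leq 1/4$ when $\alpha$ is near $1/2$. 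You therefore need a second ingredient your sketch omits for the upper bound: submultiplicativity of $\overline{d}_r(t)=\inf_{\mu}\sup_{x,z}\dtv\bigl(\int p_s(x,\cdot)\,\mu(ds),\int p_s(z,\cdot)\,\mu(ds)\bigr)$ (the paper's \claimref{submult}, proved by passing to the discrete chain with kernel $m(x,y)=\int p_s(x,y)\,\mu(ds)$), which boosts ``bounded away from $1$'' at time $T$ to ``$\leq 1/4$'' at time $k(\alpha)T$. With these two changes your occupation-measure/shift argument becomes a correct variant of the paper's mod-$t$ coupling.
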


\begin{remark}Our proof can be easily adapted to discrete time. Peres and Sousi \cite{PeresSousi_MyPaper} have proved a variant of \thmref{general} where $\Trmix^Q(1/4)$ is replaced by another notion of time-averaged mixing, with $\mu$ a geometric distribution with success probability $1/t$.\end{remark}

\subsection{Discussion of the results}\label{sec:discussion}

Outside of potential applications to bounding mixing, Theorems \ref{thm:main} and \ref{thm:general} seem conceptually interesting. They show that mixing times are {\em natural} in that they are strongly related to hitting times, a quantity of intrinsic interest. For instance, we have the following immediate corollary of \thmref{general}.
\begin{corollary}There exists some universal $C>0$ such that for any irreducible Markov chain in discrete or continuous time,
$$\forall x\in E,\,\forall \emptyset\neq A\subset V\,:\,
\Exp{x}{H_A}\leq \frac{C\,\sup_{B\subset V,\, \pi(B)\geq 1/3}
\sup_{y\in E}\Exp{y}{H_B}}{\pi(A)}.$$ \end{corollary}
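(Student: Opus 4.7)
The plan is to reformulate the corollary as an inequality between two hitting-time quantities and then deduce it by chaining \thmref{general} with the general (non-reversible) Aldous--L\'{o}vasz--Winkler inequality of \cite{AldousLovaszWinkler_IneqGeneral}. Writing $\Thit^Q = \sup_{x \in E,\,\emptyset \neq A \subset E} \pi(A)\,\Exp{x}{H_A}$ as in \thmref{aldous}, the claim is equivalent to the unconditional bound $\Thit^Q \leq C\,\Thit^Q(1/3)$, since dividing both sides by $\pi(A)>0$ (positive by irreducibility) and taking the supremum over the left-hand side produces exactly the corollary.

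To establish $\Thit^Q \leq C\,\Thit^Q(1/3)$, I would first apply \thmref{general} with $\alpha = 1/3$ to obtain
$$\Trmix^Q(1/4) \leq C'_+(1/3)\,\Thit^Q(1/3).$$
Then I would invoke the Aldous--L\'{o}vasz--Winkler analogue of \thmref{aldous}, which holds for arbitrary irreducible chains (in discrete or continuous time) and supplies a universal constant $c>0$ such that $\Thit^Q \leq c\,\Trmix^Q(1/4)$. Chaining these two inequalities yields $\Thit^Q \leq c\,C'_+(1/3)\,\Thit^Q(1/3)$, and setting $C = c\,C'_+(1/3)$ finishes the continuous-time case. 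The discrete-time case is identical, using the discrete-time versions of both ingredients, as indicated in the remarks following \thmref{main} and \thmref{general}.

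There is no serious technical obstacle here: the heavy lifting is entirely inside \thmref{general}, and the corollary is essentially a bookkeeping step that converts the upper bound on $\Trmix^Q(1/4)$ into a purely hitting-time statement by way of the non-reversible AL--W inequality. The only mildly non-obvious point is conceptual rather than technical: \emph{a priori} the quantity $\Thit^Q$ could be driven by pathological contributions from sets $A$ with arbitrarily small $\pi(A)$, and the content of the corollary is precisely that this does not happen --- one can always reduce to the supremum over sets of stationary mass at least $1/3$, at the cost of only a universal multiplicative constant.
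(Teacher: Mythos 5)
Your proposal is correct and coincides with the proof the paper sketches when it omits the argument: the chain of inequalities $\Thit^Q\leq c\,\Trmix^Q(1/4)\leq c'\,\Thit^Q(1/3)$, with the first step supplied by the Aldous--L\'{o}v\'{a}sz--Winkler analogue of \thmref{aldous} and the second by \thmref{general} with $\alpha=1/3$, followed by dividing through by $\pi(A)$.
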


We omit the proof, which follows from $\Thit^Q\leq c\,\Trmix(1/4)\leq c'\,\Thit^Q(1/3)$ (with $c,c'>0$ universal). This result says that one may control the hitting times of small sets via those of large sets.sOther applications of (slight variants of) our theorems are considered in \cite{PeresSousi_MyPaper}.  

The limitation $\alpha<1/2$ is not clearly necessary for the Theorems to hold. However, Peres \cite{Peres_Personal} noted that one cannot allow $\alpha>1/2$. In that case one may contradict the two theorems by connecting two complete graphs $K_n$ by a single edge. In this case $\Thit(\alpha)=\bigoh{n}$ whenever $\alpha>1/2$, since any set $A$ with $\pi(A)>0$ occupies a cosntant proportion of the mass of each clique. However, mixing requires crossing the connecting edge, so $\Tmix(1/4) = \ohmega{\Trmix(1/4)}=\ohmega{n^2}$. The intersting question is then: 

\begin{question}What happens when $\alpha=1/2$?\end{question}

In 2009 Peres conjectured that $\Thit^Q(1/2)$ is also ``equivalent up to universal constant factors" to $\Tmix^Q(1/4)$ (for lazy and reversible $Q$) and $\Trmix^Q(1/4)$ (in general) \cite{Peres_Open}. We prove this result in an upcoming paper with Griffiths, Kang and Patel.
\subsection{Steps of the proof}

The main step in the proof is \lemref{buildstop}, proven in \secref{stop}. We construct there a randomized stopping time $T$, which depends on the initial distribution, such that $X_T$ has the stationary distribution. This stopping rule is the hitting time of a randomly
chosen subset $A\subset E$, where the possible values of $A$ form a chain $A_1\supset A_2\supset \dots \supset A_n$. We will see that this property property implies that we can control the tail of $H_A$ via $\Thit^Q(\alpha)$. 
We note that this stopping time was outlined in \cite[Theorem5.4]{lovaszwinkler_stoc} and \cite[Theorem
4.9]{LovaszWinkler_MixingOfDiffusions}, but it is not explicit anywhere. Moreover, results in \cite{lovaszwinkler_stoc} imply that $T$ is minimal in some sense (cf. \remref{halt}). Peres and Sousi \cite{PeresSousi_MyPaper} prove similar results via another minimal stopping rule, the so-called {\em filling rule} that was also employed in \cite{Aldous_IneqReversible,AldousLovaszWinkler_IneqGeneral}). We believe that our construction provides an interesting alternative point of view.

Ater the construction of $T$, our paper continues with the proofs of \thmref{general}, proven in \secref{general}. The elegant argument we use argument employs \lemref{buildstop} together with a simple coupling devised in the survey \cite{LovaszWinkler_MixingOfDiffusions}. The proof of \thmref{main} in \secref{main} follows a convoluted computation in \cite{Aldous_IneqReversible}, which we reproduce in order to get the sharp form we need. An Appendix presents a simple lower bound of $\Trmix^Q(\alpha/2)$ in terms of $\Thit^Q(\alpha)$.

\subsection{Acknowledgements}

We thank Yuval Peres for the counterexample in \secref{discussion} \cite{Peres_Personal} and both him and Perla Sousi for presenting \cite{PeresSousi_MyPaper} to us.
\section{A special stationary stopping time}\label{sec:stop}

We use the notation in \secref{setup}. Recall that a {\em randomized stopping time} for this chain is a $\bT$-valued random variable $T$ such that for all $t\geq 0$ the event $\{T\leq t\}$ is measurable relative to the $\sigma$-field generated by $\{X_s\}_{s\leq t}$ and an independent random variable $U$. 

\begin{lemma}\label{lem:buildstop}Suppose $\mu_0$ is a probability measure over $E$. Then there exists a randomized stopping time $T$ with
\begin{equation}\label{eq:Trandomizes}\Prp{\mu_0}{X_T=\cdot} = \pi(\cdot) \mbox { and }\Prp{\mu_0}{T>t}\leq \eps
+\frac{T^Q_{\rm hit}(\eps)}{t}\mbox{ for all }\eps\in(0,1).\end{equation} \end{lemma}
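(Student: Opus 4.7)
The plan is to define $T$ as the hitting time $H_{A_I}$ of a random set $A_I$ drawn from a strictly decreasing chain $E = A_1 \supsetneq A_2 \supsetneq \cdots \supsetneq A_{m+1} = \emptyset$, with index $I$ independent of the trajectory $\{X_t\}$ and satisfying $\Pr{I=i} = \lambda_i$. Writing $\nu_i$ for the law of $X_{H_{A_i}}$ under $\Prpwo{\mu_0}$, which is supported on $A_i$, the distribution of $X_T$ is $\sum_i \lambda_i \nu_i$, so the task reduces to constructing the chain $(A_i)$ together with nonnegative weights $(\lambda_i)$ summing to one such that $\sum_i \lambda_i \nu_i = \pi$, and then extracting a tail bound on $T$.

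The construction I would use is greedy. Start with $A_1 := E$ and $\sigma_0 := 0$. At step $i$, compute $\nu_i$, set
\[
\lambda_i := \min_{y \in A_i,\, \nu_i(y) > 0} \frac{\pi(y) - \sigma_{i-1}(y)}{\nu_i(y)},
\]
pick any minimizer $x_i$, and put $A_{i+1} := A_i \setminus \{x_i\}$ and $\sigma_i := \sigma_{i-1} + \lambda_i \nu_i$. The inductive invariant I would maintain is that $\sigma_i \leq \pi$ pointwise with equality on $E \setminus A_{i+1}$: inequality on the support of $\nu_i$ follows from the defining minimum, equality at $x_i$ by direct computation, and on the remaining points of $A_i$ because they are left untouched and were below $\pi$ by induction. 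Irreducibility of $Q$ ensures $\nu_i$ is a probability measure on $A_i$, so the minimum is well defined and nonnegative. Because $|A_i|$ strictly decreases, after at most $|E|$ steps one reaches $\sigma_m = \pi$, so $\sum_i \lambda_i = 1$ and $T$ is a valid randomized stationary time. The main obstacle here is exactly this invariant: one must argue that states $y \in A_i$ with $\nu_i(y) = 0$, which the greedy update does not directly control, are simply left alone and eventually absorbed once further removals allow the chain to reach them first.

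For the tail bound, fix $\eps \in (0,1)$ and let $i^* := \max\{i : \pi(A_i) \geq \eps\}$. By independence of $I$, $\Prp{\mu_0}{T > t} = \sum_i \lambda_i \Prp{\mu_0}{H_{A_i} > t}$. For $i \leq i^*$, since $\Exp{\mu_0}{H_{A_i}}$ is a convex combination of the quantities $\Exp{x}{H_{A_i}}$ and $\pi(A_i) \geq \eps$, Markov's inequality yields $\Prp{\mu_0}{H_{A_i} > t} \leq \Thit^Q(\eps)/t$. For $i > i^*$, I would use that $X_T \in A_I \subseteq A_{i^*+1}$ whenever $I > i^*$, so $\{I > i^*\} \subseteq \{X_T \in A_{i^*+1}\}$; stationarity of $X_T$ then gives $\sum_{i > i^*} \lambda_i \leq \pi(A_{i^*+1}) < \eps$. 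Summing the two contributions gives the desired bound $\Prp{\mu_0}{T > t} \leq \eps + \Thit^Q(\eps)/t$.
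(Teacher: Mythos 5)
Your proposal is correct and follows essentially the same route as the paper: $T$ is the hitting time of a random set drawn from a nested chain $E=A_1\supsetneq\dots\supsetneq A_{|E|}$ built greedily so that the mixture of harmonic measures equals $\pi$, with the same pointwise invariant $\sigma_i\leq\pi$ (equality off $A_{i+1}$) and the same Markov-inequality tail estimate. The only (harmless) differences are that you verify $\sum_i\lambda_i=1$ a posteriori at termination instead of proving the paper's averaging Claim that a weight in $[0,1]$ exists at each step, and that you derive $\sum_{i>i^*}\lambda_i<\eps$ from the already-established stationarity of $X_T$ rather than from the algebraic identity $\sum_{j\leq k}p_j\geq 1-\pi(A_{k+1})$.
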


\begin{remark}The same result works (with a slightly different proof) if $\pi$ is replaced by another target distribution $\mu_1$ over $E$ and $\pi$ substitutes $\mu_1$ in the definition of $\Thit^Q(\eps)$.\end{remark}

\begin{remark}\label{rem:halt}Although we do not use this, one can show that $\Exp{\mu_0}{T}$ is minimal among all randomized stopping times with $\Prp{\mu}{X_T=\cdot}=\pi(\cdot)$. This is because our $T$ has a halting state \cite[Theorem 4.5]{LovaszWinkler_MixingOfDiffusions}.\end{remark}

\begin{remark}\label{rem:optimize}We note from the definitions that $\Thit^Q(\eps)\leq \Thit^Q/\eps$. We may plug this into \lemref{buildstop} and optimize over $\eps$ to deduce:
$$\Prp{\mu_0}{T>t}\leq \sqrt{\frac{\Thit^Q}{t}}.$$
Aldous \cite{Aldous_IneqReversible} proves a similar bound for a different stopping time, which he uses to prove \thmref{aldous}. The same proof would go through with our own $T$. Another proof of \thmref{aldous} is presented in \cite{PeresSousi_MyPaper}\end{remark}

\begin{proof}Let $n\equiv |E|$ denote the cardinality of $E$. The idea in the proof is to find a
chain of subsets $E=A_1\supset A_2\supset \dots\supset A_n\neq
\emptyset$ and numbers $p_1,\dots,p_n\geq 0$ with $\sum_ip_i=1$. We then define a random $A$ that equals $A_i$ with probability $p_i$ and define $T=H_A$.
We will then show that if $\{X_t\}_{t\geq 0}$ is a realization $\Prpwo{\mu_0}$ that is independent from $A$, then ${\rm Law}(X_{T})=\pi$. The tail
behavior of $T=H_A$ will follow automatically from the construction.\\

\noindent{\sf Notation.} For any set $\emptyset\neq S\subset E$, let $\rho_S(\cdot) =
\Prp{\mu_0}{X_{H_S}=\cdot}$ denote the harmonic measure on $S$ for the
chain started from $\mu_0$. The irreducibility of the chain implies that
$H_S<+\infty$ $\Prpwo{\mu_0}$-a.s. and therefore $\rho_S$ is a
probability measure over $E$ with support in $S$.\\

\noindent{\sf Inductive construction of $(A_i,p_i)$:} Set $A_1=E$ and choose $a_1\in A_1$ so that
$\rho_{A_1}(a_1)/\pi(a_1)$ is the maximum of $\rho_{A_1}(a)/\pi(a)$
over all $a\in A_1$. Since the $\pi$-weighted average of such ratios
satisfies:
$$\sum_{a\in A_1}\pi(a)\left(\frac{\rho_{A_1}(a)}{\pi(a)}\right) =\sum_{a\in
A_1}\rho_{A_1}(a)=1,$$ the maximal value must satisfy
$\rho_{A_1}(a_1)/\pi(a_1)\geq 1$. We then choose $p_1 =
\pi(a_1)/\rho_{A_1}(a_1)$ and note that $p_1\in [0,1]$,
$p_1\rho_{A_1}(a_1)=\pi(a_1)$ and $p_1\rho_{A_1}(a)/\pi(a)\leq 1$
for all other $a\in E\backslash\{a_1\}$.

Assume inductively that we have chosen distinct elements
$a_1,\dots,a_k\in E$ and numbers $0\leq p_1,\dots,p_k\leq 1$ such
that if $A_i=E\backslash\{a_j\,:\, 1\leq j<i\}$ ($1\leq i\leq k$),
we have the following properties:
\begin{enumerate}
\item for
all $1\leq j\leq k$, $\sum_{i=1}^kp_i\rho_{A_i}(a_j) = \pi(a_j);$
\item moreover, for $a\in E\backslash\{a_1,\dots,a_k\}$,
$\sum_{i=1}^kp_i\rho_{A_i}(a) \leq \pi(a).$\end{enumerate} Assume also that $k<n$, so that 
$A_{k+1} = E\backslash \{a_1,\dots, a_k\}$ is non-empty. We will prove that one may choose $(p_{k+1},a_{k+1})$ so as to preserve these properties for one further step. The following claim is the key:
\begin{claim}The set $\sP_{k+1}\subset [0,1]\times A_{k+1}$ of all $(p,a)$ with $\sum_{i=1}^kp_i\rho_{A_{i}}(a) + p\,\rho_{A_{k+1}}(a) = \pi(a)$
is non-empty.\end{claim}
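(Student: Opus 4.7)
The plan is to mirror the base case $k=1$: define the ``deficit'' function $f(a) := \pi(a) - \sum_{i=1}^{k} p_{i}\rho_{A_{i}}(a)$ on $A_{k+1}$, which is nonnegative by the induction hypothesis (property 2). Finding a pair $(p,a) \in \sP_{k+1}$ amounts to finding $a\in A_{k+1}$ such that the equation $p\,\rho_{A_{k+1}}(a)=f(a)$ has a solution $p\in[0,1]$, i.e.\ such that $\rho_{A_{k+1}}(a)\geq f(a)$ (with $p=0$ if $f(a)=0$).

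First I would handle existence by an averaging/pigeonhole argument. If $\rho_{A_{k+1}}(a) < f(a)$ for every $a\in A_{k+1}$, summing over $A_{k+1}$ yields $1 = \sum_{a\in A_{k+1}}\rho_{A_{k+1}}(a) < \sum_{a\in A_{k+1}} f(a)$, which I will contradict by showing the right-hand side is at most $1$. So the heart of the argument is the identity
\[
\sum_{a\in A_{k+1}} f(a) \;=\; 1-\sum_{i=1}^{k} p_{i}.
\]

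To prove this identity, I would use induction property 1 in the form $\sum_{j=1}^{k}\pi(a_{j}) = \sum_{j=1}^{k}\sum_{i=1}^{k}p_{i}\rho_{A_{i}}(a_{j})$, and exploit the fact that $\rho_{A_{i}}$ is supported on $A_{i}$ while $A_{i}\setminus A_{k+1}=\{a_{i},\dots,a_{k}\}$, so that $\rho_{A_{i}}(a_{j})$ vanishes for $j<i$ and $\sum_{j=1}^{k}\rho_{A_{i}}(a_{j}) = \rho_{A_{i}}(A_{i}\setminus A_{k+1}) = 1 - \rho_{A_{i}}(A_{k+1})$. Switching the order of summation gives $\sum_{j=1}^{k}\pi(a_{j}) = \sum_{i=1}^{k}p_{i} - \sum_{i=1}^{k}p_{i}\rho_{A_{i}}(A_{k+1})$, and plugging this into $\sum_{a\in A_{k+1}} f(a) = \pi(A_{k+1}) - \sum_{i=1}^{k}p_{i}\rho_{A_{i}}(A_{k+1})$ yields $\sum_{a\in A_{k+1}} f(a) = \pi(A_{k+1}) + \sum_{j=1}^{k}\pi(a_{j}) - \sum_{i=1}^{k}p_{i} = 1 - \sum_{i=1}^{k}p_{i}$.

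Since each $p_{i}\geq 0$ this quantity is $\leq 1$, completing the contradiction above and producing a valid $a\in A_{k+1}$ with $\rho_{A_{k+1}}(a)\geq f(a)$; setting $p:=f(a)/\rho_{A_{k+1}}(a)\in[0,1]$ (or $p=0$ when $f(a)=0$) gives the desired element of $\sP_{k+1}$. The main (though quite mild) obstacle is the bookkeeping in the identity for $\sum f(a)$ — keeping careful track of which $a_j$'s lie in which $A_i$ — but once that is handled the rest is immediate. The same identity also makes it natural to then pick $a_{k+1}$ maximizing $\rho_{A_{k+1}}(a)/\pi(a)$ in the next step of the outer induction, ensuring the analogue of property 2 is maintained for future iterations.
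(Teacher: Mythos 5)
Your argument is correct, and it reaches the conclusion by a route that is parallel to but genuinely distinct from the paper's. The paper's proof of the Claim ignores the earlier steps of the construction entirely: it only uses that $\rho_{A_{k+1}}$ is a probability measure on $A_{k+1}$ and that $\pi(A_{k+1})\leq 1$, so the $\pi$-weighted average of $\rho_{A_{k+1}}(a)/\pi(a)$ over $A_{k+1}$ is $\geq 1$; this produces an $a$ with the \emph{stronger} pointwise property $\rho_{A_{k+1}}(a)\geq\pi(a)$, after which inductive property 2 and the intermediate value theorem in $p$ finish the job. You instead pigeonhole $\rho_{A_{k+1}}$ directly against the deficit $f(a)=\pi(a)-\sum_{i\leq k}p_i\rho_{A_i}(a)$, which forces you to establish the exact identity $\sum_{a\in A_{k+1}}f(a)=1-\sum_{i\leq k}p_i$ via inductive property 1 and the support bookkeeping $A_i\setminus A_{k+1}=\{a_i,\dots,a_k\}$; that identity is correct as you derive it (and is essentially the equality version of the paper's later inequality \eqref{eq:sizeofsetp_i}, which the paper only needs in the tail analysis). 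The trade-off: the paper's version is shorter and uses less of the induction hypothesis, while yours proves the sharper fact that the total remaining deficit on $A_{k+1}$ equals the unallocated probability $1-\sum_{i\leq k}p_i$, which makes the eventual conclusion $\sum_i p_i=1$ transparent. Both are complete; your handling of the degenerate case $\rho_{A_{k+1}}(a)=0$ (forcing $f(a)=0$, take $p=0$) is the right way to close the last gap.
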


Given the claim, we choose a pair $(p_{k+1},a_{k+1})\in \sP_{k+1}$  with {\em minimum} value of the first coordinate. Let us show that condition $2.$ above
remains valid for $a\in E\backslash \{a_1,\dots,a_{k+1}\}$. Any $a$ violating $2$ would have to satisfy:
$$\sum_{i=1}^kp_i\rho_{A_i}(a) \leq \pi(a)<p_{k+1}\rho_{A_{k+1}}(a) + \sum_{i=1}^kp_i\rho_{A_i}(a),$$
and this would imply that there is some $0\leq p<p_{k+1}$ with:
$$p\,\rho_{A_{k+1}}(a) + \sum_{i=1}^kp_i\rho_{A_i}(a)  = \pi(a)\,\, (\mbox{ie. } (p,a)\in \sP_{k+1}),$$
which would contradict the minimality of $p_{k+1}$.

To prove that condition $1.$ also remains valid, we simply observe that it certainly holds for $a_{k+1}$ and that it also holds for $a_i$, $i<k+1$, because $a_i\not\in A_{k+1}$ and therefore $\rho_{A_{k+1}}(a_i)=0$ . Hence such a choice of $p_{k+1},a_{k+1}$ preserves the induction hypothesis for one more
step.

We now prove the Claim. Notice that:
$$\frac{\sum_{a\in A_{k+1}}\pi(a)\left(\frac{\rho_{A_{k+1}}(a)}{\pi(a)}\right)}{\sum_{a\in
A_{k+1}}\pi(a)}\geq \frac{\sum_{a\in A_{k+1}}\pi(a)\left(\frac{\rho_{A_{k+1}}(a)}{\pi(a)}\right)}{\sum_{a\in
E}\pi(a)}=\sum_{a\in A_{k+1}}\rho_{A_{k+1}}(a) = 1.$$
Since the first term in the LHS is an average, there must exist some 
$a\in A_{k+1}$ with $\rho_{A_{k+1}}(a)\geq \pi(a)$, whence:
$$\sum_{i=1}^kp_i\rho_{A_{i}}(a) + \rho_{A_{k+1}}(a) \geq \pi(a).$$
Moreover, the inductive assumption $2.$ implies that $\sum_{i=1}^kp_i\rho_{A_{i}}(a) \leq\pi(a),$ so there exists some $p\in[0,1]$ with
$$\sum_{i=1}^kp_i\rho_{A_{i}}(a) + p\rho_{A_{k+1}}(a) =\pi(a),$$
which proves the claim.\\

\noindent {\sf Analysis of the construction. } Carrying the induction to its end at $k=n$ implies that there exist $p_1,\dots,p_n\in [0,1]$ and
an ordering $a_1,\dots,a_n$ of the elements of $E$ such that, if
$A_i\equiv E\backslash\{a_j:1\leq j<i\}$, then: $$\forall 1\leq
i\leq n, \pi(a_i) = \sum_{j=1}^n p_j\rho_{A_j}(a_i) = \sum_{j=1}^i
p_j\rho_{A_j}(a_i)$$ (the last identity in the RHS follows from $a_i\not\in A_j$ for $j>i$).

These are the only facts about the construction we will use in the remainder of the analysis. We now prove some consequences of these facts. First notice that:
$$\sum_{j=1}^n p_j = \sum_{j=1}^np_j\,\sum_{i=1}^n\rho_{A_j}(a_i)= \sum_{i=1}^n\sum_{j=1}^np_j\rho_{A_j}(a_i) = \sum_{i=1}^n\pi(a_i) = 1,$$
which implies that the $p_i$ form a probability distribution over
$\{1,\dots,n\}$. Moreover, the same line of reasoning implies that
for all $k\in\{1,\dots,n\}$:
\begin{equation}\label{eq:sizeofsetp_i}\sum_{j=1}^kp_j \geq
\sum_{j=1}^kp_j\,\left(\sum_{i=1}^k\rho_{A_j}(a_i)\right)=\sum_{i=1}^k\sum_{j=1}^kp_j\rho_{A_j}(a_i)
= \sum_{i=1}^k\pi(a_i) = 1-\pi(A_{k+1}),\end{equation} where
$A_{n+1}=\emptyset$ by definition.

We now define our randomized stopping time as $T=H_A$, where the
choice of $A$ is independent of the realization of the chain and
$\Pr{A=A_i}=p_i$, $1\leq i\leq n$. Notice that $A\neq \emptyset$,
hence $T<+\infty$ almost surely. Moreover, it is easy to check that $\Prp{\mu_0}{X_T = \cdot}=\pi(\cdot)$, as desired.

To finish, we bound the upper tail of $T$. Given $\eps\in (0,1)$, let $j(\eps)$ be
the largest $j\in [n+1]$ with $\pi(A_j)\geq \eps$ (recall our convention $A_{n+1}=\emptyset$). Since the $A_i$'s form a decreasing chain,
\eqnref{sizeofsetp_i} implies:$$\Prp{\mu_0}{\pi(A)\geq \eps} =
\sum_{i=1}^{j(\eps)}\Pr{A=A_i} = \sum_{i=1}^{j(\eps)}p_i\geq 1 - \pi(A_{j(\eps)+1})\geq 1-\eps.$$ Moreover, $j\leq j(\eps)$ imples $A_j\supset A_{j(\eps)}$. We deduce:
\begin{eqnarray*}\Prp{\mu_0}{T>t}&\leq &\Prp{\mu_0}{\pi(A)<\eps} +
\Prp{\mu_0}{H_A>t\mid \pi(A)\geq \eps}\\ & \leq &  \eps + 
\Prp{\mu_0}{H_{A_{j(\eps)}}>t}\\
&\leq & \eps + \frac{\Exp{\mu_0}{H_{A_{j(\eps)}}}}{t} \leq  \eps+ \frac{\Thit^Q(\eps)}{t}.\end{eqnarray*}\end{proof}

\section{Mixing of non-reversible chains}\label{sec:general}
In this section we prove \thmref{general}. 

\begin{proof}[of \thmref{general}] The lower bound on $\Trmix^Q(\alpha)$ follows easily from the ideas in \cite{AldousLovaszWinkler_IneqGeneral}. We give a proof in the Appendix for completeness. For the upper bound, we proceed as follows. Define:
$$\overline{d}_r(t) = \inf\limits_{\mu\in M_1([0,t])}\sup_{x,z\in E}\,\dtv\left(\int_0^t\,p_s(x,\cdot)\,\mu(ds),\int_0^t\,p_s(z,\cdot)\,\mu(ds)\right).$$
\begin{claim}\label{claim:submult}For all $t\geq 0$, $$\overline{d}_r(kt)\leq \overline{d}_r(t)^k.$$\end{claim}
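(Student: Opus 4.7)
The plan is to recognize $\overline{d}_r$ as a Dobrushin-type contraction coefficient and use the standard submultiplicativity of Dobrushin coefficients under kernel composition. The key reformulation is as follows: for any $\mu\in M_1([0,t])$, define the stochastic kernel
\[
K^{\mu}(x,y)\;=\;\int_0^t p_s(x,y)\,\mu(ds),\qquad x,y\in E,
\]
and its Dobrushin coefficient
\[
\alpha(K^\mu)\;=\;\sup_{x,z\in E}\dtv\!\bigl(K^\mu(x,\cdot),K^\mu(z,\cdot)\bigr).
\]
Then by definition $\overline{d}_r(t)=\inf_{\mu\in M_1([0,t])}\alpha(K^\mu)$, and by compactness of $M_1([0,t])$ (or by taking near-optimizers) there is a $\mu^*$ with $\alpha(K^{\mu^*})=\overline{d}_r(t)$.

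The central observation is that convolution of time-measures corresponds to composition of the associated averaged kernels. Concretely, for $\mu,\nu\in M_1([0,t])$, let $\mu*\nu\in M_1([0,2t])$ denote the law of $S+S'$ where $S\sim\mu$, $S'\sim\nu$ are independent. Chapman--Kolmogorov gives, for every $x,y\in E$,
\[
K^{\mu}K^{\nu}(x,y)=\sum_{w\in E}\int_0^t\!\!\int_0^t p_{s_1}(x,w)p_{s_2}(w,y)\,\mu(ds_1)\nu(ds_2)=\int_0^{2t}p_s(x,y)\,(\mu*\nu)(ds)=K^{\mu*\nu}(x,y).
\]
Iterating, the $k$-fold convolution $\mu^{*k}\in M_1([0,kt])$ satisfies $K^{\mu^{*k}}=(K^\mu)^k$.

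The last ingredient is the classical submultiplicativity of the Dobrushin coefficient: for any two stochastic kernels $K_1,K_2$,
\[
\alpha(K_1K_2)\leq\alpha(K_1)\,\alpha(K_2),
\]
which itself follows from the contraction bound $\|\sigma K-\tau K\|_{\rm TV}\leq\alpha(K)\|\sigma-\tau\|_{\rm TV}$ applied to $\sigma=\delta_xK_1$, $\tau=\delta_zK_1$. Taking $\mu^*$ optimal for $\overline{d}_r(t)$ and putting these pieces together,
\[
\overline{d}_r(kt)\leq\alpha(K^{\mu^{*k}})=\alpha\bigl((K^{\mu^*})^k\bigr)\leq\alpha(K^{\mu^*})^k=\overline{d}_r(t)^k.
\]

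There is no real obstacle here; the only thing one must be careful about is making sure the convolution stays supported in $[0,kt]$ and the Chapman--Kolmogorov manipulation is carried out in continuous time. The proof is essentially the same as the classical argument that total variation distance to stationarity is submultiplicative under time-homogeneous Markov kernels, lifted to the time-averaged setting.
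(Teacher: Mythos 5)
Your proof is correct and is essentially the paper's argument in different clothing: the paper also takes an optimal $\mu$ by compactness, forms the averaged kernel $m(x,y)=\int_0^t p_s(x,y)\,\mu(ds)$, observes that its $k$-th power corresponds to the $k$-fold convolution $\mu^{*k}\in M_1([0,kt])$, and then invokes the standard submultiplicativity of $\sup_{x,z}\dtv(m_k(x,\cdot),m_k(z,\cdot))$ (Lemma 4.12 of Levin--Peres--Wilmer), which is exactly your Dobrushin-coefficient bound. No gaps.
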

\begin{proof}[of the Claim]~A standard compactness argument shows that there exists a measure $\mu$ which achieves the infimum in the definition of $\overline{d}_r(t)$. Let $M$ be the discrete time Markov chain whose transition probabilities are given by:
\begin{equation}\label{eq:defm}m(x,y)\equiv \int_{0}^t\,p_s(x,y)\,\mu(ds),\;(x,y)\in E^2.\end{equation}
Define:
$$\overline{d}_M(k)\equiv \sup_{(x,y)\in E^2}d_{\rm TV}(m_t(x,\cdot),m_t(y,\cdot))$$
where $m_t$ is the transition probability for $t$ steps of $m$. Notice that $\overline{d}_M(1) = \overline{d}_r(t)$ by the choice of $\mu$. Moreover, $\overline{d}_r(kt)\leq \overline{d}_M(k)$ because $k$ steps of $M$ correspond to replacing $\mu$ in \eqnref{defm} by its $k$-fold convolution with itself $\mu^{* t}$. Lemma 4.12 in \cite{LevinPeresWilmer_Book} implies that 
$$\overline{d}_r(kt)\leq \overline{d}_M(k)\leq \overline{d}_M(1)^k= \overline{d}_r(t)^k.$$\end{proof}

Notice that $\overline{d}_r(t)\leq 1/4$ implies $\Trmix^Q(1/4)\leq t$. We will spend most of the rest of the proof proving that for all irreducible Markov chains $Q$,
\begin{equation}\label{eq:goalgeneral}\mbox{\bf Goal: }\overline{d}_r\left(c(\alpha)\,\Thit^Q(\alpha)\right)\leq 1 - \delta(\alpha),\end{equation}
where $c(\alpha),\delta(\alpha)>0$ depend only on $\alpha\in (0,1/2)$. Applying the Claim with $t=c(\alpha)\,\Thit^Q(\alpha)$ and $k=k(\alpha)$ such that $(1-\delta(\alpha))^k\leq 1/4$ we may then deduce that
$$\Trmix^Q(\alpha)\leq C_+(\alpha)\, \Thit^Q(\alpha)\mbox{ where }C_+(\alpha)=k(\alpha)\,c(\alpha)\mbox{ depends only on $\alpha$,}$$ 
which is the desired result.

Given $x,z\in E$, we let $\{X_t\}_{t\geq 0}$ and $\{Z_t\}_{t\geq 0}$ denote trajectories of $Q$ started from $x$ and $z$ (respectively). Let $T_x$, $T_z$ be obtained from \lemref{buildstop} for $\mu_0=\delta_x$ and $\delta_z$ (resp.). Clearly, 
$${\rm Law}(X_{T_x}) = {\rm Law}(Z_{T_z}) = \pi.$$
Sample $\sU$ uniformly from $[0,t]$ and independently from the two chains. The Markov property and the stationarity of $\pi$ imply:
$${\rm Law}(X_{T_x+\sU}) = {\rm Law}(Z_{T_z+\sU}) = \pi.$$ 
Now fix some $t\geq 0$ and define $$\sU_x\equiv (T_x+\sU)\mod t\mbox{ and }\sU_z = (T_z+\sU)\mod t.$$ Notice that $\sU_x$ is uniform over $[0,t]$, independently from $\{X_t\}_{t\geq 0}$, and similarly for $\sU_z$. Hence:
$${\rm Law}(X_{\sU_x}) = \int_0^t\,p_s(x,\cdot)\,\mu(ds)\mbox{ and } {\rm Law}(Z_{\sU_z}) = \int_0^t\,p_s(z,\cdot)\,\mu(ds),$$
where $\mu$ is uniform over $[0,t]$. Therefore,
\begin{eqnarray}\nonumber\dtv\left(\int_0^t\,p_s(x,\cdot)\,\mu(ds),\int_0^t\,p_s(z,\cdot)\,\mu(ds)\right) &=& \dtv({\rm Law}(X_{\sU_x}),{\rm Law}(Z_{\sU_z}))\\ \label{eq:RHStailgeneral}&\leq &\dtv({\rm Law}(X_{\sU_x}),{\rm Law}(X_{T_x+\sU})) \\ \nonumber & & + \dtv({\rm Law}(Z_{\sU_z}),{\rm Law}(Z_{T_z+\sU}))\end{eqnarray}
by the triangle inequality and the previous remarks. 
We now show that: \begin{equation}\label{eq:TailUx}\dtv({\rm Law}(X_{\sU_x}),{\rm Law}(X_{T_x+\sU})) \leq \alpha + 2\sqrt{\frac{\Thit^Q(\alpha)}{t}}.\end{equation} This is of course trivial if $t<\Thit^Q(\alpha)$, so we assume the opposite is true. The coupling characterization of total variation distance implies that for any $\lambda\in (0,1)$:
\begin{eqnarray*}\dtv({\rm Law}(X_{\sU_x}),{\rm Law}(X_{T_x+\sU}))&\leq &\Prp{x}{X_{\sU_x}\neq X_{T_x+\sU}} \\ &\leq &\Prp{x}{\sU>t-T_x}\\ &\leq &\Prp{x}{T_x>\lambda\,t} + \Pr{(1-\lambda)t\leq \sU\leq t}\\ \mbox{(use \lemref{buildstop})}&=& \alpha + \frac{\Thit^Q(\alpha)}{\lambda t} +\lambda\end{eqnarray*}
Choosing $\lambda = \sqrt{\Thit^Q(\alpha)/t}$  gives \eqnref{TailUx}. We plug this and the corresponding statement for $Z_{T_x+\sU}$ into \eqnref{RHStailgeneral} to deduce:
$$\dtv\left(\int_0^t\,p_s(x,\cdot)\,\mu(ds),\int_0^t\,p_s(z,\cdot)\,\mu(ds)\right) \leq 2\alpha + 4\sqrt{\frac{\Thit^Q(\alpha)}{t}}.$$
Now recall that $\alpha<1/2$ and take 
$$t = t(\alpha)\equiv \frac{64\,\Thit^Q(\alpha)}{(1-2\alpha)^2}.$$
For this value of $t$, we have:
$$\dtv\left(\int_0^t\,p_s(x,\cdot)\,\mu(ds),\int_0^t\,p_s(z,\cdot)\,\mu(ds)\right) \leq \frac{1+2\alpha}{2}.$$
Since $x,z$ are arbitrary, we deduce \eqnref{goalgeneral} with $c(\alpha) = 64/(1-2\alpha)^2$ and $\delta(\alpha) = (1-2\alpha)/2$.\end{proof}

\section{Mixing of reversible chains}\label{sec:main}

We now prove \thmref{main}.

\begin{proof}[of \thmref{main}] Notice that $\Tmix^Q(\alpha)\geq \Trmix^Q(\alpha)$, so the lower bound in the Appendix also applies here. For the upper bound, we first define:
$$\overline{d}(t)\equiv \sup_{x,z\in E}\,\dtv(p_t(x,\cdot),p_t(z,\cdot)).$$
It is well-known that $\overline{d}$ is submultiplicative \cite[Chapter 2]{AldousFill_Book} and that $\overline{d}(t)\leq 1/4$ implies $\Tmix^Q(1/4)\leq t$. In light of this, we need to show that:
\begin{equation}\label{eq:goalmain}\mbox{\bf Goal: }\overline{d}\left(c(\alpha)\,\Thit^Q(\alpha)\right)\leq 1 - \delta(\alpha),\end{equation}
where $c(\alpha),\delta(\alpha)>0$ depend only on $\alpha\in (0,1/2)$.\\

\noindent {\sf Basic definitions for the proof.} Let $U>L>0$ (we will choose their values later).  Fix a pair $x,z\in E$ and let $\{X_t\}_{t\geq 0}$ and $\{Z_t\}_{t\geq 0}$ denote trajectories of $Q$ started from $x$ and $z$ (respectively). Also let $T_x,T_z$ be the randomized stopping times given by \lemref{buildstop} for the $X$ and $Z$ processes, and define $\eta_x,\eta_z$ to be the probability distributions of $(X_{T_x},T_x)$ and $(Z_{T_z},T_z)$ over $E\times [0,+\infty)$. Finally, we let $f_x(a)\equiv \Prp{x}{X_{T_x}=a,T_x\leq L}$ and $f_z(a)=\Prp{z}{Z_{T_z}=a,T_z\leq L}$ ($a\in E$).\\

\noindent {\sf Estimating total variation distance.} Recall:
$$\dtv(p_t(x,\cdot),p_t(z,\cdot)) = \frac{1}{2}\sum_{a\in E}|p_t(x,a)-p_t(z,a)|$$
Notice that:
$$p_t(x,a) =\Prp{x}{X_t=a, T_x\leq L} + \Prp{x}{X_t=a,T_x>L},$$
and similarly for $p_t(z,a)$. Therefore,
\begin{eqnarray}\nonumber \dtv(p_t(x,\cdot),p_t(z,\cdot)) &\leq &\frac{1}{2}\sum_{a\in E}|\Prp{x}{X_t=a, T_x\leq L}  - \Prp{z}{Z_t=a, T_z\leq L}|\\ \nonumber & & + \frac{1}{2}\sum_{a\in E}|\Prp{x}{X_t=a, T_x>L}  - \Prp{z}{Z_t=a, T_z>L}|\\ \nonumber &\leq &\frac{1}{2}\sqrt{\sum_{a\in E}\frac{\left(\Prp{x}{X_t=a, T_x\leq L}  - \Prp{z}{Z_t=a, T_z\leq L}\right)^2}{\pi(a)}}\\ \label{eq:L2CS1} & & + \frac{1}{2}\sum_{a\in E}|\Prp{x}{X_t=a, T_x>L}  - \Prp{z}{Z_t=a, T_z>L}|.\end{eqnarray}
 where the last line uses the Cauchy Schwartz inequality. We may further bound:
\begin{eqnarray*}\sum_{a\in E}|\Prp{x}{X_t=a, T_x>L}  - \Prp{z}{Z_t=a, T_z>L}|&\leq &\sum_{a\in E}\Prp{x}{X_t=a,T_x>L}\\& & +\sum_{a\in E}\Prp{z}{Z_t=a,T_z>L}\\ &\leq& \Prp{x}{T_x>L} + \Prp{z}{T_z>L},\end{eqnarray*}
and plugging this into \eqnref{L2CS1} gives the inequality:
\begin{eqnarray}\nonumber \dtv(p_t(x,\cdot),p_t(z,\cdot)) &\leq &\frac{1}{2}\sqrt{\sum_{a\in E}\frac{\left(\Prp{x}{X_t=a, T_x\leq L}  - \Prp{z}{Z_t=a, T_z\leq L}\right)^2}{\pi(a)}} \\ \label{eq:L2CS}& & + \frac{\Prp{x}{T_x>L} + \Prp{z}{T_z>L}}{2}.\end{eqnarray}

\noindent {\sf Averaging.} Our next step is to average the LHS and RHS of \eqnref{L2CS} over $t\in [L,U]$. Since $\dtv(p_t(x,\cdot),p_t(z,\cdot))$ is decreasing in $t$ \cite{LevinPeresWilmer_Book}, the distance at time $t=U$ is at most this average. We use concavity to move the averaging inside the square root and deduce:
\begin{multline}\label{eq:L2CSavg}\dtv(p_U(x,\cdot),p_U(z,\cdot))\leq \frac{1}{U-L}\int_{L}^U\dtv(p_t(x,\cdot),p_t(z,\cdot))\,dt\\ \leq \frac{1}{2}\sqrt{\frac{1}{U-L}\int_{L}^U\sum_{a\in E}\frac{\left(\Prp{x}{X_t=a, T_x\leq L}  - \Prp{z}{Z_t=a, T_z\leq L}\right)^2}{\pi(a)}}\\ + \frac{\Prp{x}{T_x>L} + \Prp{z}{T_z>L}}{2}.\end{multline}

\noindent {\sf The term inside the square root.} Define $E_L\equiv E\times [0,L]$. By the strong Markov property:
\begin{eqnarray*}\Prp{x}{X_t=a,T_x\leq L}^2 &=& \left(\int_{E_L}\,p_{t-s}(u,a)\,d\eta_x(u,s)\right)^2 \\ &=& \int_{E_L}\int_{E_L}\,p_{t-s}(u,a)p_{t-s'}(u',a)\,d\eta_x(u,s)d\eta_x(u',s').\end{eqnarray*}
By reversibility, we may rewrite the integrand in the RHS as
$$p_{t-s}(u,a)\pi(a)p_{t-s'}(a,u')/\pi(u'),$$ which implies that:
\begin{multline*}\sum_{a\in E}\frac{\Prp{x}{X_t=a,T_x\leq
L}^2}{\pi(a)} \\ = \int_{E_L}\int_{E_L}\,\left(\sum_{a\in
E}\frac{p_{t-s}(u,a)p_{t-s'}(a,u')}{\pi(u')}\right)\,d\eta_x(u,s)d\eta_x(u',s')\\= \int_{E_L}\int_{E_L}
\frac{p_{2t-s'-s'}(u,u')}{\pi(u')}\,d\eta_x(u,s)d\eta_x(u',s').\end{multline*}
Integrating over $t$ (with the change of variables $t' = 2t-s-s'$),
we find that:
\begin{multline}\label{eq:trocavar}\frac{1}{U-L}\int_{L}^{U}\sum_{a\in
E}\frac{\Prp{x}{X_t=a,T_x\leq L}^2}{\pi(a)}\,dt \\ =
\int_{E_L}\int_{E_L}\left(\frac{1}{2U-2L}\int_{2L-s-s'}^{2U-s-s'}\frac{p_{t'}(u,u')}{\pi(u')}\,dt'\right)\,d\eta_x(u,s)d\eta_x(u',s')\\
\leq \int_{E_L}\int_{E_L}\left(\frac{1}{2U-2L}\int_{0}^{2U}\frac{p_{t'}(u,u')}{\pi(u')}\,dt'\right)\,d\eta_x(u,s)d\eta_x(u',s')\end{multline}
where the last inequality follows from the fact that
$[2L-s-s',2U-s-s']\subset [0,2U]$, which holds for all $s,s'$ in the range considered. With this the bracketed term becomes independent of $s$, which may be integrated out. Since:
$$\int_{\{u\}\times [0,L]}\,d\eta_x(u,s)=f_x(u)\leq \pi(u),$$
 we obtain:
\begin{multline}\frac{1}{U-L}\int_{L}^{U}\sum_{a\in
E}\frac{\Prp{x}{X_t=a,T_x\leq L}^2}{\pi(a)}\,dt \\ \leq  \sum_{u,u'\in
E}\frac{f_x(u)f_x(u')}{\pi(u')}\left(\frac{1}{2U-2L}\int_{0}^{2U}p_{t'}(u,u')\,dt'\right)\\
\leq \sum_{u,u'\in
E}\frac{f_x(u)f_x(u')}{\pi(u')}\left(\frac{1}{2U-2L}\int_{2L}^{2U}p_{w}(u,u')\,dw\right)
 \\ + \sum_{u,u'\in
E}\frac{\pi(u)}{2U-2L}\int_{0}^{2L}p_{w}(u,u')\,dw\\ \leq \sum_{u,u'\in
E}\frac{f_x(u)f_x(u')}{\pi(u')}\left(\frac{1}{2U-2L}\int_{2L}^{2U}p_{w}(u,u')\,dw\right)
+ \frac{L}{U-L},\end{multline}
as well as a similar bound for $z$. On the other hand,  starting from the formula:
\begin{multline*}\Prp{x}{X_t=a,T_x\leq L}\,\Prp{z}{Z_t=a,T_z\leq L}  \\ = \int_{E_L}\int_{E_L}\,p_{t-s}(u,z)p_{t-s'}(u',z)\,d\eta_x(u,s)d\eta_z(u',s')\end{multline*}averaging over $t\in [L,U]$ and using $[2L-s-s',2L+2U-s-s']\supset [2L,2U]$, we may obtain:
\begin{eqnarray*}\frac{1}{U-L}\int_{L}^{U}\sum_{a\in
E}\frac{\Prp{x}{X_t=a,T_x\leq L}\Prp{z}{Z_t=a,T_z\leq
L}}{\pi(z)}\,dt\\ \geq  \sum_{u,u'\in
E}\frac{f_x(u)f_z(u')}{\pi(u')}\left(\frac{1}{2U-2L}\int_{2L}^{2U}p_{w}(u,u')\,dw\right).\end{eqnarray*} Combining these bounds we obtain\begin{eqnarray*}\sum_{z\in
E}\frac{1}{U-L}\int_{L}^{U}\frac{(\Prp{x}{X_t=z,T_x\leq L}
-\Prp{x}{Z_t=z,T_z\leq L})^2}{\pi(z)}\\ \leq \sum_{u,u'\in
E}(f_x(u)-f_z(u))\left(\frac{f_x(u')-f_z(u')}{\pi(u')}\right)\left(\frac{1}{2U}\int_{2L}^{2U}p_{w}(u,u')\,dw\right)
+ \frac{2L}{U-L}.\end{eqnarray*}

To bound the sum in the RHS, we notice again that
$f_x(\cdot),f_z(\cdot)\leq \pi(\cdot)$, and also that for all
$u\in E$, $\sum_{u'}p_{w}(u,u')=1$. Hence
\begin{multline*}\sum_{u,u'\in
E}(f_x(u)-f_z(u))\left(\frac{f_x(u')-f_z(u')}{\pi(u')}\right)\left(\frac{1}{2U-2L}\int_{2L}^{2U}p_{w}(u,u')\,dw\right)\\ \leq
\sum_{u\in E}|f_x(u)-f_z(u)|.\end{multline*} Now recall that
$$f_x(u) = \Prp{x}{X_{T_x}=u,T_x\leq L} = \pi(u) - \Prp{x}{X_{T_x}=u,T_x>L}$$ and
similarly for $z$, so that $$\sum_{u\in
E}|f_x(u)-f_z(u)| =\sum_{a\in E}|\Prp{x}{X_{T_x}=a,T_x>L}
-\Prp{z}{Z_{T_z}=a,T_z>L}|.$$

We deduce that the term inside the square root in \eqnref{L2CSavg} is bounded by:
\begin{multline*}\frac{1}{U-L}\int_{L}^{U}\sum_{a\in
E}\frac{(\Prp{x}{X_t=a,T_x\leq L}-\Prp{z}{Z_t=a,T_z\leq L})^2}{\pi(z)}\,dt\\ \leq \sum_{a\in E}|\Prp{x}{X_{T_x}=a,T_x>L}
-\Prp{z}{Z_{T_z}=a,T_z>L}| + \frac{2L}{U-L}\\ \leq \sum_{a\in E}\Prp{x}{X_{T_x}=a,T_x>L}
 + \sum_{a\in E}\Prp{z}{Z_{T_z}=a,T_z>L} + \frac{2L}{U-L}\\ \leq \Prp{x}{T_x>L} + \Prp{z}{T_z>L} + \frac{2L}{U-L}.\end{multline*}

\noindent{\sf Wrapping up.} We now plug this previous inequality into \eqnref{L2CSavg} to deduce:
\begin{multline*}\dtv(p_U(x,\cdot),p_U(z,\cdot))\\ \leq \frac{1}{2}\sqrt{\Prp{x}{T_x>L} + \Prp{z}{T_z>L} + \frac{2L}{U-L}} \\ + \frac{\Prp{x}{T_x>L} + \Prp{z}{T_z>L}}{2}. \end{multline*}
If the quantity inside the square root is $<1$, we get another upper bound:
\begin{equation}\label{eq:sqrt<1}\dtv(p_U(x,\cdot),p_U(z,\cdot))\leq \sqrt{\Prp{x}{T_x>L} + \Prp{z}{T_z>L} + \frac{2L}{U-L}}\end{equation}
Now by \lemref{buildstop}
$$\Prp{x}{T_x>L}+\Prp{z}{T_z>L}\leq 2\alpha + 2\frac{\Thit^Q(\alpha)}{L}$$
so choosing 
$$L= \frac{8\Thit^Q(\alpha)}{1-2\alpha}\mbox{ and }U = \left[\frac{8}{1-2\alpha} +\left(\frac{8}{1-2\alpha}\right)^2 \right]\,\Thit^Q(\alpha)$$
we obtain:
$$ \Prp{x}{T_x>L}+\Prp{z}{T_z>L}+ \frac{2L}{U-L}\leq \frac{1+2\alpha}{2}<1.$$
Thus the condition for \eqnref{sqrt<1} is satisfied, and we have the bound:
$$\dtv(p_U(x,\cdot),p_U(z,\cdot))\leq \sqrt{\frac{1+2\alpha}{2}}.$$
Since $x,z\in E$ are arbitrary, we deduce:
$$\overline{d}\left(\left[\frac{8}{1-2\alpha} +\left(\frac{8}{1-2\alpha}\right)^2 \right]\,\Thit^Q(\alpha)\right)\leq 1 - \left(1-\sqrt{\frac{1+2\alpha}{2}}\right),$$
which has the form requested in \eqnref{goalmain}.\end{proof}

\appendix

\section*{Appendix: the lower bound}

In this section we prove the lower bound part of the main theorems. As above, $Q$ is a irreducible continuous-time Markov chain with state space $E$ and stationary distribution $\pi$. The trajectories of the chain are denoted by $\{X_t\}_{t\geq 0}$

\begin{proposition}\label{prop:appendix}For any $\alpha\in (0,1)$, $\Thit^Q(\alpha)\leq c(\alpha)\Trmix^Q$ where $c(\alpha)>0$ depends only on $\alpha$.\end{proposition}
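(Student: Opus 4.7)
The plan is to fix any $A \subset E$ with $\pi(A)\geq \alpha$ and any starting state $x\in E$, and bound $\Exp{x}{H_A}$ by a constant depending only on $\alpha$ times $\Trmix^Q(1/4)$. The argument proceeds in three short steps: extract a uniform lower bound on the probability of hitting $A$ within one ``epoch'' of length $\Trmix^Q(\alpha/2)$; iterate via the strong Markov property to get geometric tails on $H_A$; and convert $\Trmix^Q$ at the small parameter $\alpha/2$ back to $\Trmix^Q(1/4)$ by submultiplicativity.

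For the first step, I would set $t_0\equiv \Trmix^Q(\alpha/2)$ and pick a measure $\mu\in M_1([0,t_0])$ witnessing the averaged mixing property, so that $\dtv(\int_0^{t_0} p_s(x,\cdot)\,\mu(ds),\pi)\leq \alpha/2$ for every $x$. Evaluating on the set $A$ gives $\int_0^{t_0} p_s(x,A)\,\mu(ds) \geq \pi(A) - \alpha/2 \geq \alpha/2$. If $S\sim \mu$ is sampled independently of the chain, the left-hand side equals $\Prp{x}{X_S\in A}$, and since $S\leq t_0$ almost surely we have $\Prp{x}{X_S\in A}\leq \Prp{x}{H_A\leq t_0}$. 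Thus $\Prp{x}{H_A\leq t_0}\geq \alpha/2$ uniformly in the starting state. For the second step, applying the strong Markov property at time $t_0$ and iterating yields $\Prp{x}{H_A>kt_0}\leq (1-\alpha/2)^k$ for every $k\geq 0$, whence $\Exp{x}{H_A}\leq t_0/(\alpha/2) = 2\,\Trmix^Q(\alpha/2)/\alpha$.

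For the third step, I would appeal to the submultiplicativity established in \claimref{submult}. A triangle inequality against $\pi$ gives $\overline{d}_r(\Trmix^Q(1/4))\leq 1/2$, so \claimref{submult} implies $\overline{d}_r(k\,\Trmix^Q(1/4))\leq 2^{-k}$ for each $k\in\N$. Convexity of total variation (averaging the second starting distribution over $\pi$) converts this bound into $\Trmix^Q(2^{-k})\leq k\,\Trmix^Q(1/4)$. Taking $k=\lceil \log_2(2/\alpha)\rceil$ and combining with step two gives $\Thit^Q(\alpha)\leq c(\alpha)\,\Trmix^Q(1/4)$ with $c(\alpha)=2\lceil\log_2(2/\alpha)\rceil/\alpha$. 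There is no substantial obstacle: the argument is essentially a chaining of averaged mixing at parameter $\alpha/2$, the strong Markov property, and the submultiplicativity already in hand. The only point requiring care is the harmless logarithmic loss when converting between different mixing parameters, which is absorbed into the constant $c(\alpha)$.
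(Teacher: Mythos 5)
Your proposal is correct and follows essentially the same route as the paper: a one-epoch hitting bound of $\alpha/2$ from the averaged mixing measure at parameter $\alpha/2$, geometric iteration via the Markov property, and the conversion $\Trmix^Q(2^{-k})\leq k\,\Trmix^Q(1/4)$ from Claim~\ref{claim:submult}. The only difference is that you spell out the submultiplicativity conversion in more detail than the paper does, which is fine.
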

\begin{proof}It follows from \claimref{submult} that:
$$\Trmix^Q(1/2^k)\leq k\,\Trmix^Q(1/4).$$
In particular, 
$$\Trmix^Q(\alpha)\leq (\log_2(1/\alpha) + 1)\,\Trmix^Q(1/4).$$
Thus it suffices to show that $\Thit^Q(\alpha)\leq (2/\alpha)\,\Trmix^Q(\alpha/2)$.

Fix $A\subset V$ with measure $\pi(A)\geq \alpha$ and $x\in V$. By the definition of $\Trmix^Q(\alpha/2)$ and a simple compactness argument, there exists a distribution supported on $[0,\Trmix^Q(\alpha/2)]$ such that if $\sU$ has this distribution and is independent from $\{X_t\}_t$, 
$$\dtv({\rm Law}(X_\sU),\pi) \leq 1 - \alpha/2.$$
As a result,
$$\Prp{x}{X_\sU\not\in A} \leq 1 - \pi(A) + \dtv({\rm Law}(X_\sU),\pi)\leq 1 - \frac{\alpha}{2}.$$
Since $\sU$ is supported in $[0,\Trmix^Q(\alpha/2)]$, $$\{H_A\geq \Trmix^Q(\alpha/2)\}\subset\{X_\sU\not\in A\},$$ and we deduce:
\begin{equation}\label{eq:induct}\forall x\in V,\, \forall A\subset V\mbox{ with }\pi(A)\geq \alpha\,:\,\Prp{x}{H_A\geq \Trmix^Q(\alpha/2)} \leq 1 - \frac{\alpha}{2}.\end{equation}
Let us use this to show that $\Exp{x}{H_A}\leq (2/\alpha)\,\Trmix^Q(\alpha/2)$ for all $x$ and $A$ as above. Let $k\in \N\backslash\{0\}$ and denote by $\Lambda_k$ the law of $X_{(k-1)\Trmix^Q(\alpha/2)}$ conditioned on $\{H_A\geq (k-1)\Trmix^Q(\alpha/2)\}$. By \eqnref{induct},
$$\Prp{\Lambda_k}{H_A\geq \Trmix^Q(\alpha/2)}\leq 1- \frac{\alpha}{2},$$
whereas by the Markov property,
\begin{eqnarray*}\Prp{x}{H_A\geq k\Trmix^Q(\alpha/2)}&\leq & \Prp{x}{H_A\geq (k-1)\Trmix^Q(\alpha/2)}\Prp{\Lambda_k}{H_A\geq\Trmix^Q(\alpha/2)}\\ &\leq & \left(1 -\frac{\alpha}{2}\right)\,\Prp{x}{H_A\geq (k-1)\Trmix^Q(\alpha/2)}\\ \mbox{(...induction...)}&\leq & \left(1 -\frac{\alpha}{2}\right)^k\end{eqnarray*}
We deduce:
$$\frac{\Exp{x}{H_A}}{\Trmix^Q(\alpha/2)}\leq \sum_{k\geq 0}\left(1 -\frac{\alpha}{2}\right)^k = \frac{2}{\alpha}$$
Since $x\in V$ and $A\subset V$ with $\pi(A)\geq \alpha$ were arbitrary, this finishes the proof.\end{proof}


\begin{thebibliography}{5}
\bibitem{Peres_Open}List of open problems from AIM Workshop on Algorithmic Convex Geometry. \texttt{http://www.aimath.org/WWN/convexgeometry/convexgeometry.pdf}. Compiled by Navin Goyal (2009).
\bibitem{Aldous_IneqReversible}David Aldous. ``Some Inequalities for
Reversible Markov Chains". Journal of the London Mathematical
Society 1982 s2-25(3):564-576.
\bibitem{AldousFill_Book}David Aldous and James Allen Fill. {\em Reversible Markov Chains and Random Walks on
Graphs}. Book draft available from
\texttt{http://www.stat.berkeley.edu/\~{}aldous/}.
\bibitem{AldousLovaszWinkler_IneqGeneral}David Aldous, L\'{a}szl\'{o} Lov\'{a}sz and Peter
Winkler. ``Mixing times for uniformly ergodic Markov chains".
 Stochastic Processes and their Applications, Volume 71, Number 2, 15 November 1997 , pp. 165--185(21).
\bibitem{LevinPeresWilmer_Book}David Levin, Yuval Peres, and Elisabeth Wilmer. {\em Markov Chains and Mixing Times}. American Mathematical Society (2009).
 \bibitem{LovaszWinkler_MixingOfDiffusions}L\'{a}szl\'{o} Lov\'{a}sz and Peter Winkler. ``Mixing of random walks and other diffusions on a graph." In {\em Surveys in combinatorics, 1995}, Cambridge University Press (1995): 119-154. 
\bibitem{lovaszwinkler_stoc}L\'{a}szl\'{o} Lov\'{a}sz and Peter
Winkler. ``Efficient stopping rules for Markov chains." In {\em Proceedings of the 27th
ACM Symposium on the Theory of Computing} (1995): 76–82.
\bibitem{Peres_Personal}Yuval Peres. Personal communication. 
\bibitem{PeresSousi_MyPaper}Yuval Peres and Perla Sousi. ``Mixing times are hitting times of large sets." {\texttt{arXiv:1108.0133}}. 
\end{thebibliography}
\end{document}